\documentclass[11pt]{amsart}
\usepackage{amsmath}
\usepackage{tikz}
\usepackage{enumerate, comment}
\usepackage{systeme}

\usepackage{color}
\title{Some $q$-analogues of multinomial coefficients}
\newtheorem{theorem}{Theorem}
\newtheorem{conjecture}[theorem]{Conjecture}
\newtheorem{lemma}{Lemma}
\newtheorem{proposition}[theorem]{Proposition}
\newtheorem{corollary}[theorem]{Corollary}
\theoremstyle{definition}
\newtheorem*{problem*}{Problem}

\theoremstyle{remark}
\newtheorem{remark}{Remark}

\newcommand{\qrfac}[2]{{\left({#1}; q\right)_{#2}}} 
\newcommand{\qbin}[2]{\begin{bmatrix} #1 \\ #2 \end{bmatrix}_q}
\newcommand{\pqbin}[3]{\begin{bmatrix} #1 \\ #2 \end{bmatrix}_{#3}}
\newcommand{\Fq}{\mathbb{F}_q}

\newcommand{\pqrfac}[3]{{\left({#1};#3\right)_{#2}}}
\numberwithin{equation}{section}

\makeatletter

\@ifundefined{qbinom}{
  \newcommand{\qbinom}[2]{\begin{bmatrix} #1 \\ #2 \end{bmatrix}_q}
}{}

\@ifundefined{qrfac}{
  \newcommand{\qrfac}[3]{(#1;#3)_{#2}}
}{}

\@ifundefined{ZZ}{
  
}{}

\makeatother

\newcommand{\jj}{\boldsymbol j}

\newcommand\sumj{{\left| \boldsymbol {j} \right|}}
\newcommand\sumk{{\left| \boldsymbol {k} \right|}}
\newcommand{\sumbinomial}[2]{\sum\limits_{r=1}^{#2} {\binom{#1_r}{2}}}

\allowdisplaybreaks

\usepackage{color}
 \usepackage[bookmarks=true]{hyperref}
\hypersetup{
    colorlinks=true, 
    linktoc=all,     
    linkcolor=blue,  
    citecolor=blue,
    hyperindex =true
}

\renewcommand{\j}{\boldsymbol {j}}

\author[G.~Bhatnagar]{Gaurav Bhatnagar
}

\address{RamanujanExplained.org, 18 Chitra Vihar, Delhi 110092}
\email{bhatnagarg@gmail.com}

\author[A.~Prasad]{Amritanshu Prasad}
\address{Homi Bhabha National Institute, Anushakti Nagar, Mumbai, 400094 and
The Institute of Mathematical Sciences, Chennai 600113}
\email{amri@imsc.res.in}

\title[An extension of a $q$-binomial theorem]{A bibasic double sum extension of a $q$-binomial theorem arising out of subspace enumeration}

\subjclass{Primary 05A19; Secondary 33D15, 33D65}

\keywords{bibasic and basic hypergeometric series,  subspace enumeration, $q$-binomial theorem, Touchard--Riordan identity}

\begin{document}
\maketitle

\begin{abstract} 
We prove a conjecture that arose in the context of a subspace enumeration problem over finite fields. We prove, more generally, a bibasic, double-sum identity, which extends a $q$-analogue of the (terminating) binomial theorem. 
\end{abstract}

\section{Introduction}
We prove a conjecture that arose in a combinatorial context, while considering a subspace enumeration problem over finite fields. Our proof uses a $q$-analogue of the terminating binomial theorem, and leads to a double-sum identity which extends this $q$-binomial theorem. This conjecture belongs to a broader set of (conjectural) combinatorial identities.

We require the notation of $q$-rising factorials from Gasper and Rahman~\cite{GR90}. 
The {\em $q$-shifted factorials}, for $k$ a non-negative integer, are defined as
\begin{gather*}
\qrfac{a}{k} :=\prod\limits_{j=0}^{k-1} \big(1-aq^j\big);
\end{gather*}
and for $|q|<1$,
\begin{gather*}
\qrfac{a}{\infty} := \prod\limits_{j=0}^{\infty} \big(1-aq^j\big).
\end{gather*}
The parameter $q$ is called the {\em base}. Our main result is bibasic---it has two independent bases $p$ and $q$.

We use the notation $\jj=(j_1, j_2, \dots, j_r)$ and $\sumj$ for $j_1+j_2+j_3+\cdots+j_r$. 
 The $q$-multinomial coefficients are defined as:
\begin{displaymath}
    \qbinom{n}{j_1,j_2,j_3, \dots, j_r} = \frac{(q;q)_n}{(q;q)_{j_1} (q;q)_{j_2} \cdots (q;q)_{j_r}\qrfac{q}{n-\sumj}}.
\end{displaymath}
These are polynomials in $q$. As is common, we use the convention:
\begin{equation}\label{natural-term}
\frac{1}{\qrfac{q}{n}} = 0 \text{ if $n<0$},
\end{equation}
so the $q$-multinomial coefficient is $0$ if $j_r<0$ for any $r$, or, if $n< \sumj$. 
When $r=1$, this is called the $q$-binomial coefficient. 

The main objective of this paper is to prove the following conjecture. 
\begin{conjecture}[Prasad and Ram]\label{conj:1} Let  $q$ be a prime power and $\omega$ denote a primitive cube root of unity; let $\j = (j_1,j_2,j_3)$. Then:
\begin{multline}\label{conj1}
\sum_{\sumj = 3m} 
  q^{
  \frac{1}{3}\big(
  \sum\limits_{i=1}^{3}
     \binom{j_i}{2}
    - g(j_1,j_2,j_3)
  \big)
  }
  \qbin{3m}{j_1,j_2,j_3}
  \omega^{\,j_2 + 2j_3}
  = \delta_{m,0} =
  \begin{cases}
  1, & \text{if $m=0$};\\
  0, & \text{otherwise},
  \end{cases} \\[6pt]
\text{where}\quad
g(j_1,j_2,j_3)
  =
  \begin{cases}
    2j_2 + j_3, & \text{if } j_2+ 2j_3 \equiv 2 \pmod{3};\\
    j_2 + 2j_3, & \text{if } j_2 + 2j_3 \equiv 0,1 \pmod{3}.
  \end{cases}
\end{multline}
\end{conjecture}
Note that the restriction of $q$ being a prime power can be removed, since the left hand side is a polynomial in $q$. It is easy to see that
$$
  \sum\limits_{i=1}^{3}
     \binom{j_i}{2}
    - g(j_1,j_2,j_3)
  $$
is a non-negative integer divisible by $3$, so the power of $q$ is a non-negative integer. 
So this is a polynomial identity and $q$ can be an arbitrary complex number. Alternatively, all the identities in this paper may be regarded as identities for formal power series with variable $q$. 

Prasad and Ram arrived at this (unpublished) conjecture while studying an enumeration problem over finite fields posed by Bender, Coley, Robbins, and Rumsey~\cite{MR1141317}.
As a follow-up to their work~\cite{MR4555237, MR4797454}, they were seeking the coefficients of a \emph{universal formula} of the form
\begin{displaymath}
   \sum_{\sumj =3m} c^T_{\j}(q) Y^T_{\j} = \sigma^T_{(m,m,m)},
\end{displaymath}
for any matrix $T\in M_{3m}(\Fq)$. Here $\sigma^T_{(m,m,m)}$ is the number of $m$-dimensional subspaces $W\subset \Fq^{3m}$ such that $W\oplus TW\oplus T^2W = \Fq^{3m}$; $Y^T_{\j}$ is the number of flags $W_1\subset W_2\subset W_3$ of subspaces of $\Fq^{3m}$ such that $T(W_i)\subset W_i$, with $\dim(W_i/W_{i-1})=j_i$ for $i=1,2,3$; and $c^T_{\j}(q)$ is a polynomial in $q$ independent of the matrix $T$. The form of the universal formula was predicted by \cite{MR1141317}. We give more details in \S\ref{sec:2}.


We prove the following generalization of \eqref{conj1}.
\begin{theorem}\label{th:conj1-gen}  
Let $q$ be a complex number, $\omega$ denote a primitive cube root of unity; and let $\j = (j_1,j_2,j_3)$.
Then
\begin{multline}\label{conj1-gen}
\sum_{\sumj = 3m} 
  q^{
  \frac{1}{3}\big( j_2^2+j_2j_3+j_3^2
    - g(j_1,j_2,j_3)
  \big)
  }
  z^{j_2+j_3}
  \qbin{3m}{j_1,j_2,j_3}
    \omega^{\,j_2 + 2j_3}
  = \qrfac{z}{3m}, \\
\text{where,}\quad
g(j_1,j_2,j_3)
  =
  \begin{cases}
    2j_2 + j_3, & \text{if } j_2+ 2j_3 \equiv 2 \pmod{3};\\
    j_2 + 2j_3, & \text{if } j_2 + 2j_3 \equiv 0,1 \pmod{3}.
  \end{cases}
\end{multline}
\end{theorem}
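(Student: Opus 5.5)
The plan is to reduce the double sum to a single sum using the $q$-binomial theorem in its terminating (Rothe) form
\begin{equation*}
\qrfac{z}{n}=\sum_{k=0}^{n}\qbin{n}{k}(-1)^k q^{\binom{k}{2}}z^k .
\end{equation*}
Set $k=j_2+j_3$, so that $j_1=3m-k$. The $q$-multinomial coefficient then factors as
\begin{equation*}
\qbin{3m}{j_1,j_2,j_3}=\qbin{3m}{k}\qbin{k}{j_2},
\end{equation*}
and the power $z^{j_2+j_3}=z^k$ depends only on $k$. Comparing coefficients of $z^k$ on both sides of \eqref{conj1-gen}, it is enough to prove, for every $k\ge 0$ and with $j=j_2$, $k-j=j_3$,
\begin{equation*}
S_k:=\sum_{j=0}^{k}\qbin{k}{j}\, q^{\frac13\left(k^2-j(k-j)-g(3m-k,\,j,\,k-j)\right)}\,\omega^{\,2k-j}=(-1)^k q^{\binom{k}{2}} .
\end{equation*}
Here I used $j_2^2+j_2j_3+j_3^2=k^2-j_2j_3$ and $j_2+2j_3=2k-j$. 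Note that $g$ does not depend on $j_1$, so $S_k$ is independent of $m$, as it should be.

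Next I would make $g$ explicit. The condition on $j_2+2j_3 \bmod 3$ becomes a condition on $2k-j\equiv -(j+k)\pmod 3$. So I would split the sum over $j$ into the three residue classes of $j$ modulo $3$. On each class, $\omega^{2k-j}$ is a constant and $g$ is a fixed linear function of $j$ and $k$. Each piece is then a sum of $q$-binomial coefficients with a quadratic exponent $\tfrac13 j^2+(\text{linear})$, restricted to an arithmetic progression. This is a cube-root-of-unity analogue of Gauss's evaluation of $\sum_j(-1)^j\qbin{k}{j}$, where $-1$ is replaced by $\omega$ and the sign is absorbed into the piecewise exponent.

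To prove $S_k=(-1)^kq^{\binom k2}$, I would proceed by induction on $k$, showing $S_{k+1}=-q^{k}S_k$. I would expand $\qbin{k+1}{j}$ by the two $q$-Pascal rules,
\begin{equation*}
\qbin{k+1}{j}=\qbin{k}{j-1}+q^{j}\qbin{k}{j}=q^{k+1-j}\qbin{k}{j-1}+\qbin{k}{j},
\end{equation*}
choosing on each residue class the version that makes the shifted exponent match one of the three residue-class pieces of $S_k$. An alternative is to introduce the auxiliary sums $A_k^{(r)}$ (the part of $S_k$ over $j\equiv r \pmod 3$). I would derive a $3\times 3$ linear recurrence for them, show that it is consistent with $A_k^{(r)}$ equal to explicit multiples of $q^{\binom k2}$ times powers of $\omega$, and check the cases $k=0,1,2$ directly. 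Another route is to introduce a second base $p$, in the spirit of the bibasic extension alluded to in the abstract: replace $q^{1/3}$-type exponents by powers of $p$, prove the more flexible identity by the same recurrence, and specialize at the end.

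I expect the main obstacle to be the bookkeeping of the piecewise exponent $g$ across residue classes. A $q$-Pascal step shifts $j$ by one and so moves terms between residue classes, where $g$ follows a different rule. The crux is checking that, with the right choice of Pascal rule in each class, the extra powers of $q$ produced ($q^j$ or $q^{k+1-j}$) exactly compensate the change in $\tfrac13(k^2-j(k-j)-g)$, and that the leftover terms cancel via $1+\omega+\omega^2=0$. I would first verify $k\le 3$ by hand to pin down which Pascal rule to use in each class, and then carry out the general computation.
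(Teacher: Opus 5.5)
Your reduction is correct and matches the paper's: with $k=j_2+j_3$ the theorem is equivalent to $S_k=(-1)^kq^{\binom k2}$ for all $k$. The paper applies the $q$-binomial theorem at the end rather than comparing coefficients at the start. The gap is in how you propose to prove that evaluation.

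Write $S_k=A_k^{(0)}+\omega A_k^{(1)}+\omega^2A_k^{(2)}$, where $A_k^{(c)}$ is the polynomial formed by the terms with $2k-j\equiv c \pmod 3$, with the root of unity removed. Since $\omega^2=-1-\omega$ and $\omega\notin\mathbb{Q}(q)$, the claim is equivalent to \emph{two} identities: $A_k^{(1)}=A_k^{(2)}$ and $A_k^{(0)}-A_k^{(2)}=(-1)^kq^{\binom k2}$.

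Your $q$-Pascal induction on the full $\omega$-weighted sum cannot deliver the first identity. Consider the class $2(k+1)-j\equiv 1$, the one carrying $\omega$. Each Pascal piece there has a power of $q$ differing from that of the matching level-$k$ term by $\tfrac13(k+j-1)$ plus $0$, $j$ or $k+1-j$ (depending on the piece and the rule). This is never independent of $j$. So that class cannot be rewritten through level-$k$ class sums with either rule, whatever bookkeeping you do. Your ``leftover terms cancel via $1+\omega+\omega^2=0$'' is valid only once $A^{(1)}=A^{(2)}$ is already known.

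Your $3\times 3$ alternative rests on a false premise: the class sums are not monomials. For example, $(A_2^{(0)},A_2^{(1)},A_2^{(2)})=(1+q,1,1)$ and $(A_3^{(0)},A_3^{(1)},A_3^{(2)})=(q+q^2,\;q+q^2+q^3,\;q+q^2+q^3)$.

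The missing idea is the reflection $j\mapsto k-j$, that is, $j_2\leftrightarrow j_3$. It maps class $1$ onto class $2$ and fixes $\qbin{k}{j}$. It also fixes the exponent, because $j_2^2+j_2j_3+j_3^2$ is symmetric and $g$ switches between $j_2+2j_3$ and $2j_2+j_3$ exactly when the class switches. This gives $A_k^{(1)}=A_k^{(2)}$, which is Proposition~\ref{prop:1}(i), and hence $S_k=A_k^{(0)}-A_k^{(2)}$.

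The induction must be run on this $\omega$-free difference, and there your Pascal idea does work. On class $0$ at level $k+1$ use $\qbin{k+1}{j}=q^{k+1-j}\qbin{k}{j-1}+\qbin{k}{j}$. On class $2$ use $\qbin{k+1}{j}=\qbin{k}{j-1}+q^{j}\qbin{k}{j}$. Summed over their classes with their powers of $q$, the pieces $q^{k+1-j}\qbin{k}{j-1}$ and $q^{j}\qbin{k}{j}$ give $q^kA_k^{(2)}$ and $q^kA_k^{(0)}$ respectively. The remaining piece from class $0$ at index $j$ coincides, with the same power of $q$, with the remaining piece from class $2$ at index $j+1$, so these cancel in the difference. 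Hence $A_{k+1}^{(0)}-A_{k+1}^{(2)}=-q^k\bigl(A_k^{(0)}-A_k^{(2)}\bigr)$, which is Proposition~\ref{prop:1}(ii) in your indexing.

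Your bibasic route does not help either. The paper's bibasic extension puts $p$ on the outer binomial theorem, not on the inner exponents, so it offers no way around this step.
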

Conjecture \ref{conj:1} follows. When $m=0$, both sides of \eqref{conj1-gen} are $1$. This matches \eqref{conj1} for $m=0$. For $m>0$, take  $z=q^{-m}$ in \eqref{conj1-gen}. The right-hand side reduces to $0$; this shows \eqref{conj1} for $m>0$.

Alternatively, we can write \eqref{conj1-gen} as:
\begin{equation}\label{eq8-gen}
\sum_{n=0}^{3m}
q^{n^2} z^n
\sum_{k=n}^{2n}
 q^{\frac{1}{3}( {k^2}-g(k,n))-kn}
 \qbin{3m}{2n-k, k-n}
  \omega^k
  =
  \qrfac{z}{3m},
\end{equation}
where,
 $$ 
 g(k,n)
  =
  \begin{cases}
    3n-k, & \text{if }k \equiv 2 \pmod{3};\\
    k, & \text{if }k \equiv 0,1 \pmod{3}.
  \end{cases}
$$

The proof yields an even more general, bibasic, result:
\begin{theorem}\label{th:conj1-gen2} 
Let $p$ and $q$ be complex numbers. Then
\begin{equation}\label{eq8-gen2}
\sum_{n=0}^{3m}
q^{\binom{n+1}{2}} p^{\binom{n}{2}} z^n \pqbin{3m}{n}{p}
\sum_{k=n}^{2n}
 q^{\frac{1}{3}( {k^2}-g(k,n))-kn}
\qbin{n}{k-n}
  \omega^k
  =\pqrfac{z}{3m}{p},
\end{equation}
where,
 $$ g(k,n)
  =
  \begin{cases}
    3n-k, & \text{if }k \equiv 2 \pmod{3};\\
    k, & \text{if }k \equiv 0,1 \pmod{3}.
  \end{cases}
$$
\end{theorem}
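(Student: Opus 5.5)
The plan is to split the double sum in \eqref{eq8-gen2} into an inner $q$-sum and an outer $p$-sum, and to show that the inner sum collapses to a single monomial. Define, for $n\ge 0$,
\[
S_n(q) := \sum_{k=n}^{2n} q^{\frac{1}{3}(k^2-g(k,n))-kn}\qbin{n}{k-n}\omega^k .
\]
I aim to prove that
\begin{equation*}
S_n(q) = (-1)^n q^{-\binom{n+1}{2}} .
\end{equation*}
Granting this, the left side of \eqref{eq8-gen2} becomes
\[
\sum_{n=0}^{3m} (-1)^n p^{\binom n2} z^n \pqbin{3m}{n}{p}.
\]
By the terminating $q$-binomial theorem in base $p$ (Rothe's identity), this equals $\pqrfac{z}{3m}{p}$. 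The factor $q^{\binom{n+1}{2}}$ in \eqref{eq8-gen2} is there precisely to cancel the value of $S_n$, which is why $p$ and $q$ can be independent. A sanity check supports the claim: $S_0=1$, and for $n=1$ both terms carry $q^{-1}$, so $S_1=q^{-1}(\omega+\omega^2)=-q^{-1}$.

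To evaluate $S_n$, first substitute $k=n+j$ with $0\le j\le n$. Then sort the terms by the residue of $k$ modulo $3$, which is what determines $g$. On each residue class the exponent $\frac13(k^2-g(k,n))-kn$ is an explicit quadratic in $j$, and $\omega^k$ is a fixed cube root of unity. The natural guess is that, after inserting the factor $\omega^{k}$, the three branches of $g$ assemble into a single expression of the form $q^{\binom j2}x^j$ summed against $\qbin{n}{j}$, possibly after a finite Fourier decomposition over $\mathbb{Z}/3$. If so, the finite $q$-binomial theorem $\sum_j q^{\binom j2}\qbin nj x^j=(-x;q)_n$ should evaluate each piece. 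I expect the correct $x$ to make most factors of $(-x;q)_n$ combine via $1+\omega+\omega^2=0$.

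If that direct route is messy, the fallback is induction on $n$. I would use the $q$-Pascal rules $\qbin{n}{j}=\qbin{n-1}{j-1}+q^{j}\qbin{n-1}{j}$ and $\qbin{n}{j}=q^{n-j}\qbin{n-1}{j-1}+\qbin{n-1}{j}$ to express $S_n$ through sums of the same shape at level $n-1$. The goal would then be to prove the recurrence $S_n=-q^{-n}S_{n-1}$. Because the shift $n\mapsto n-1$ changes the residue bookkeeping for $g$, I would probably need to carry three auxiliary sums, one per residue class of $k$, and prove a $3\times 3$ linear recurrence among them. The closed forms would be guessed from small $n$.

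The main obstacle is the piecewise definition of $g$. In particular, the branch $k\equiv 2 \pmod 3$ uses $3n-k$ rather than $k$, so the exponent is not a single quadratic in $k$. The crux is to find the right regrouping, or the right auxiliary sums, that absorbs this asymmetry. I would first tabulate $S_n$ and the residue-class partial sums for $n\le 4$ to identify their closed forms, and then verify those forms either by the $q$-binomial theorem or by the induction.
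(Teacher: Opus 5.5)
Your outer reduction is the same as the paper's and is correct: once $S_n(q)=(-1)^nq^{-\binom{n+1}{2}}$ is known, the factor $q^{\binom{n+1}{2}}$ cancels and the terminating $q$-binomial theorem in base $p$ gives $\pqrfac{z}{3m}{p}$. The gap is that evaluating $S_n(q)$ is the whole content of the theorem, and neither of your routes reaches it.

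The direct route cannot work as described. With $k=n+j$, the exponent is $\frac13\bigl(j^2-(n+1)j-2n^2-n\bigr)$ on the classes $k\equiv 0,1 \pmod 3$ and $\frac13\bigl(j^2-(n-1)j-2n^2-2n\bigr)$ on $k\equiv 2$. Its leading coefficient in $j$ is $\frac13$, not $\frac12$, so no choice of $x$ produces $q^{\binom j2}x^j$. A $\mathbb{Z}/3$-multisection does not help, because it only inserts roots of unity. On $j=3i+c$ the multisected $q$-binomial theorem carries $q^{\binom{3i+c}{2}}$, with leading term $\frac92 i^2$, whereas your pieces carry $3i^2$.

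The fallback also stalls as formulated. Write $A_r(n)$ for the part of $S_n(q)$ with $k\equiv r\pmod 3$, with $\omega^k$ removed. These partial sums have no product closed forms to guess; for instance $A_2(4)=q^{-14}(1+q+2q^2+q^3)$. Moreover, applying either $q$-Pascal rule to a single $A_r(n)$ always leaves a level-$(n-1)$ piece whose weight differs by a $k$-dependent power of $q$ from that of the residue-class sum with the same $q$-binomial coefficient. So the three sums do not close up into a $3\times 3$ system.

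You are missing two ideas, which together form the paper's Proposition~\ref{prop:1}.

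\emph{Reflection.} The awkward branch of $g$ encodes a reflection rather than an obstacle. Under $k\mapsto 3n-k$ (that is, $j\mapsto n-j$), $\qbin{n}{k-n}$ is invariant and the class $k\equiv 1$ maps to $k\equiv 2$. The exponent $\frac13(k^2-k)-kn$ becomes $\frac13(k^2+k)-kn-n$, which is exactly the exponent on $k\equiv 2$. Hence $A_1(n)=A_2(n)$, and $\omega+\omega^2=-1$ gives $S_n(q)=A_0(n)-A_2(n)$.

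\emph{Recurrence.} This difference, not the individual sums, satisfies a first-order recurrence. Expand $A_0(n)$ using $\qbin{n}{r}=q^r\qbin{n-1}{r}+\qbin{n-1}{r-1}$, and $A_2(n)$ using the other rule $\qbin{n}{r}=\qbin{n-1}{r}+q^{n-r}\qbin{n-1}{r-1}$. The cross terms cancel in pairs, matching index $k$ in $A_0$ with $k-1$ in $A_2$. After reindexing, the survivors are $q^{-n}A_2(n-1)$ and $-q^{-n}A_0(n-1)$. Therefore $A_0(n)-A_2(n)=-q^{-n}\bigl(A_0(n-1)-A_2(n-1)\bigr)$, and with $A_0(0)-A_2(0)=1$ this gives the closed form.
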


Conjecture~\ref{conj1} can be regarded as a $2$-dimensional sum extending a $q$-analogue of the binomial theorem. 

An analogous $1$-dimensional case of the conjecture is the identity
\begin{equation}\label{conj-1d}
    \sum_{j=0}^{2m} (-1)^j q^{ 
    \binom{j}{2} -mj}
\qbin{2m}{j} = \delta_{m,0},
\end{equation}
which is obtained by taking $n=2m$ and $z=q^{-m}$ in the following $q$-analog of the (terminating) binomial theorem~\cite[Ex.\ 1.2(vi)]{GR90}:
\begin{equation}\label{q-vandermonde1a}
\sum_{j=0}^{n}
  \qbin{n}{j}
    (-1)^j\, q^{\binom{j}{2}} z^j   = \pqrfac{z}{n}{q}.
\end{equation}
Interestingly, the proof of the $2$-dimensional sum also relies upon the very same identity. 

Taking $m\to\infty$ in Theorem~\ref{th:conj1-gen2}, we obtain the following bibasic, double-summation identity.
\begin{corollary} Let $|p|<1$ and $g(k,n)$ be as in \eqref{eq8-gen2}. Then
\begin{equation}\label{cor1}
\sum_{ n,k\ge 0} 
p^{\binom{n}{2}}
q^{\binom{n+1}{2}+\frac{1}{3}( {k^2}-g(k,n))-kn}  
\frac{\qrfac{q}{n}  z^n \omega^k  }{\pqrfac{p}{n}{p} \pqrfac{q}{k-n}{q}\pqrfac{q}{2n-k}{q}} 
   =\pqrfac{z}{\infty}{p}.
\end{equation}
\end{corollary}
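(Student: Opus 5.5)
The plan is to avoid a limiting argument with its uniformity estimates, and instead compare coefficients of $z^n$ in Theorem~\ref{th:conj1-gen2}. This reduces the corollary to Euler's expansion of $\pqrfac{z}{\infty}{p}$. Write
\[
S_n(q):=\sum_{k=n}^{2n} q^{\frac13(k^2-g(k,n))-kn}\qbin{n}{k-n}\omega^k ,
\]
which depends only on $n$ and $q$, not on $m$ or $p$. Both sides of \eqref{eq8-gen2} are polynomials in $z$. By \eqref{q-vandermonde1a} with base $p$, the right-hand side is
\[
\pqrfac{z}{3m}{p}=\sum_{n=0}^{3m}(-1)^n p^{\binom n2}\pqbin{3m}{n}{p}z^n .
\]
Equating coefficients of $z^n$ gives
\[
q^{\binom{n+1}{2}}p^{\binom n2}\pqbin{3m}{n}{p}S_n(q)=(-1)^np^{\binom n2}\pqbin{3m}{n}{p}\quad\text{for } 0\le n\le 3m .
\]

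Since $p$ is a free parameter, we may take $p$ with $p^{\binom n2}\pqbin{3m}{n}{p}\neq 0$, for example $p$ real in $(0,1)$. Then
\begin{equation*}
q^{\binom{n+1}{2}}S_n(q)=(-1)^n
\end{equation*}
for every $n\le 3m$, and since $m$ is arbitrary, for every $n\ge0$. This is the key identity: all the content of Theorem~\ref{th:conj1-gen2} is concentrated in it, and it no longer involves $p$ or $m$.

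Next, expand the left side of \eqref{cor1}. For fixed $n$, the $q$-factorial convention \eqref{natural-term} makes the $k$-sum run only over $n\le k\le 2n$. Moreover,
\[
\frac{\qrfac{q}{n}}{\pqrfac{q}{k-n}{q}\pqrfac{q}{2n-k}{q}}=\qbin{n}{k-n}.
\]
So the double sum in \eqref{cor1} is the iterated sum
\[
\sum_{n\ge0}\frac{p^{\binom n2}z^n}{\pqrfac{p}{n}{p}}\,q^{\binom{n+1}{2}}S_n(q)=\sum_{n\ge0}\frac{(-1)^np^{\binom n2}z^n}{\pqrfac{p}{n}{p}} .
\]
By Euler's identity (the $n\to\infty$ limit of \eqref{q-vandermonde1a}, valid for $|p|<1$ and all $z$), this equals $\pqrfac{z}{\infty}{p}$. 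The series converges absolutely because $|p|^{\binom n2}$ decays superexponentially while $1/|\pqrfac{p}{n}{p}|$ stays bounded.

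The main obstacle is interpreting the double sum for arbitrary complex $q$. The individual terms carry factors $q^{-cn^2}$ with $c>0$ (for instance $k=n$ gives total exponent about $-n^2/6$), so absolute convergence of the double sum over $(n,k)$ is not obvious. Two facts make the statement meaningful. First, the inner sum is finite for each $n$. Second, after summation it collapses to the constant $(-1)^n$ by the key identity. I would therefore state that the double sum is read as the iterated sum, outer in $n$ and finite inner in $k$, or equivalently as a formal power series in $z$ whose coefficients are evaluated exactly.

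If instead one insists on passing to the limit $m\to\infty$ directly in \eqref{eq8-gen2}, Tannery's theorem applies to the outer sum after using the key identity. The bound $|\pqbin{3m}{n}{p}|\le \pqrfac{-|p|}{\infty}{|p|}/\pqrfac{|p|}{n}{|p|}$ gives a summable majorant $C|p|^{\binom n2}|z|^n$, and the termwise limits are
\[
\pqbin{3m}{n}{p}\to 1/\pqrfac{p}{n}{p},\qquad \pqrfac{z}{3m}{p}\to\pqrfac{z}{\infty}{p}.
\]
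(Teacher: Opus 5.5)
Your proof is correct, but it is organized differently from the paper's. The paper simply lets $m\to\infty$ in \eqref{eq8-gen2}, using $\pqbin{3m}{n}{p}\to 1/\pqrfac{p}{n}{p}$ and $\pqrfac{z}{3m}{p}\to\pqrfac{z}{\infty}{p}$, and leaves the interchange of limit and sum implicit. You instead isolate $q^{\binom{n+1}{2}}S_n(q)=(-1)^n$ by comparing coefficients of $z^n$ (with $p\in(0,1)$ so the prefactor is nonzero). You then resum against $p^{\binom n2}z^n/\pqrfac{p}{n}{p}$ using Euler's expansion, so no limit in $m$ is needed. Your key identity is Proposition~\ref{prop:1} repackaged: splitting $k$ by its residue mod $3$ gives $S_n(q)=S_1(n)+\omega S_2(n)+\omega^2S_3(n)=S_1(n)-S_3(n)$. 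You could therefore cite it directly rather than recover it from Theorem~\ref{th:conj1-gen2}. What your route buys is an honest account of what the left side of \eqref{cor1} means, and your caution there is, if anything, understated. For $k=n$ and $n\equiv 0,1 \pmod 3$, the summand has modulus $(|p|^3/|q|)^{\binom n2/3}|z|^n/|\pqrfac{p}{n}{p}|$, which is unbounded when $0<|q|<|p|^3$ and $z\neq 0$. So the unordered double sum genuinely diverges in that range, and the iterated reading (finite inner sum over $k$ first) is necessary, not just convenient; with that reading the corollary holds for every $q\neq 0$. The same collapse of the inner sum is also what makes the paper's limit legitimate, since it is what yields your summable majorant $C|p|^{\binom n2}|z|^n$ for Tannery's theorem in your fallback argument. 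The paper's route is shorter and presents \eqref{cor1} as the $m\to\infty$ case of the finite identity. Yours exposes the actual mechanism (Proposition~\ref{prop:1} plus Euler's identity) and the correct interpretation of the double sum.
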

Note that  in the sum on the left-hand side of \eqref{cor1}, the index $k$ ranges from $n$ to $2n$. 

When $p=q$ in \eqref{cor1}, we obtain
\begin{equation}\label{cor1-a}
\sum_{ n=0}^\infty 
q^{n^2}
z^n
\sum_{k=n}^{2n}
q^{\frac{1}{3}( {k^2}-g(k,n))-kn}  
\frac{ \omega^k}{ \pqrfac{q}{k-n}{q}\pqrfac{q}{2n-k}{q}} 
   =\pqrfac{z}{\infty}{q}.
\end{equation}
The outer sum (if we ignore the inner sum with index $k$) is what 
Andrews~\cite{Andrews1981} called a  partial theta function. 

Conjecture~\ref{conj:1} is a special case of a broader combinatorial conjecture. In \S\ref{sec:2}, we explain the origin of \eqref{conj1}, as well as the broader conjecture,
and the link to a classic combinatorial formula---namely the Touchard--Riordan~\cite{MR366686, MR46325} formula. We give a proof of Theorems~\ref{th:conj1-gen} and \ref{th:conj1-gen2} in \S\ref{sec:3}.  Our proof is just the first step in understanding these combinatorial identities.


\section{Counting subspaces and Conjecture~\ref{conj:1}}\label{sec:2}
In this section we explain the origins of Conjecture 1. This conjecture is a special case of a more general conjecture (Conjecture 5, below), 
which arose from the work of Prasad and Ram~\cite{MR4555237,  MR4797454} on an enumerative problem posed by Bender, Coley, Robbins and Rumsey~\cite{MR1141317}. A proof of Conjecture 5 would lead to an extension of the classical Touchard--Riordan formula.

Let $q$ be a prime power, and let $\Fq$ denote a finite field with $q$ elements.
Consider a matrix $T\in M_n(\Fq)$ and a subspace $W$ of $\Fq^n$.
Let  $W^{(0)}=\{0\}$, and for each $k\geq 1$, let
\begin{displaymath}
  W^{(k)} = W + T(W) + \cdots + T^k(W).
\end{displaymath}
Let $j_i = \dim W^{(i)}/W^{(i-1)}$ for $i\geq 1$.
The sequence $(j_1,j_2,\dots)$ is called the {\em dimension sequence} of $W$ with respect to $T$.
It is not difficult to see that if $(j_1,j_2,\dots)$ is the dimension sequence of a subspace $W\subset \Fq^n$ with respect to a matrix $T\in M_n(\Fq)$, then $n\geq j_1\geq j_2\geq \cdots$ and $j_1+j_2+\cdots \leq n$.
In other words, the dimension sequence of a subspace is a partition of some integer less than or equal to $n$.

Bender, Coley, Robbins and Rumsey~\cite{MR1141317} considered the following enumerative problem:
\begin{problem*}
  Let $T\in M_n(\mathbf F_q)$ be any matrix over the finite field $\mathbf F_q$ with $q$ elements.
  Enumerate the number $\sigma^T_{(j_1,j_2,\dots)}$ of subspaces $W$ with dimension sequence $(j_1,j_2,\dots)$.
\end{problem*}
The solution to this problem depends on the matrix $T$.
If $(j_1,j_2,\dots,j_k)$ is a sequence of non-negative integers such that $j_1+j_2+\cdots +j_k \leq n$, let
\begin{displaymath}
  Y^T_{(j_1,j_2,\dots,j_k)} = \#\{W_1\subset W_2\subset\dotsb \subset W_k\mid T(W_i)\subset W_i,\; \dim(W_i/W_{i-1})=j_i\}.
\end{displaymath}
The number $Y^T_{(j_1,j_2,\dots,j_k)}$ remains unchanged under reordering of the sequence $(j_1$, $j_2$, $\dots$, $j_k)$, so we may assume that 
$$j_1 \ge j_2 \ge \dots \ge j_k$$ is a partition of some integer less than or equal to $n$.
Bender, Coley, Robbins and Rumsey~\cite{MR1141317} showed that there exists a \emph{universal formula}
\begin{equation}\label{eq:universal}
  \sigma^T_{(j_1,j_2,\dots)} = \sum_{(k_1,k_2,\dots)} c^{(k_1,k_2,\dots)}_{(j_1,j_2,\dots)}(q) Y^T_{(k_1,k_2,\dots)},
\end{equation}
for all partitions $(j_1,j_2,\dots)$, where the coefficients $c^{(k_1,k_2,\dots)}_{(j_1,j_2,\dots)}$ are polynomials in $q$ independent of the matrix $T$.

The general problem of determining the coefficients $c^{(k_1,k_2,\dots)}_{(j_1,j_2,\dots)}$ was solved by Ram~\cite{ram2024subspaceprofilesfinitefields}, where the solution is given in terms of symmetric functions. However, the work in this paper concerns more explicit formulas solving special cases of this problem. 

For example, when $n=2m$, $\sigma^T_{(m,m)}$ counts the number of subspaces $W\subset \Fq^{2m}$ of dimension $m$ such that $W\oplus TW = \Fq^{2m}$.
Prasad and Ram~\cite{MR4555237} obtained
\begin{equation}\label{eq:mm}
  \sigma^T_{(m,m)} = q^{\binom m2}\sum_{j=0}^{2m} (-1)^j q^{\binom{m-j+1}{2}} Y^T_{(j)},
\end{equation}
giving a closed formula for the coefficients in \eqref{eq:universal} in the case of the partition $(m,m)$.
When $T=0$ (or any scalar matrix), $$Y^T_{(j)}=\qbin{2m}{j},$$ 
while $\sigma^T_{(m,m)}=\delta_{m,0}$ (since $W=TW$ for any subspace $W$). As a result, we obtain \eqref{conj-1d}.

In seeking a formula analogous to \eqref{eq:mm} for $\sigma^T_{(m,m,m)}$, Prasad and Ram arrived at the following (unpublished) conjecture.  
\begin{conjecture}[Prasad and Ram]\label{conj:2} For any $T\in M_{3m}(\Fq)$:
\begin{equation}\label{eq:conj2}
  \sigma^T_{(m,m,m)} = q^{m^2-m}\sum_{\sumj=3m} q^{\frac 13\left(\sum_{i=1}^3 \binom{j_i}{2} - g(j_1,j_2,j_3)\right)} Y^T_{\j} \omega^{j_2+2j_3},
\end{equation}
with $g(j_1,j_2,j_3)$ as in \eqref{conj1}.
\end{conjecture}
Again, when $T$ is a scalar matrix, 
$$Y^T_{\j} = \qbin{3m}{j_1,j_2,j_3},$$ 
and $\sigma^T_{(m,m,m)}=\delta_{m,0}$. This yields Conjecture~\ref{conj:1}, the proof of which is the object of this paper, and given in \S\ref{sec:3}.

By varying $T$, a plethora of new identities can be obtained from Conjecture~\ref{conj:2}. 
We illustrate one such possibility by giving an example of a combinatorial nature.

When $T\in M_{2m}(\Fq)$ has distinct eigenvalues in $\Fq$, Prasad and Ram \cite{MR4797454} obtained a different formula for the left-hand side  $\sigma^T_{(m,m)}$ of 
\eqref{eq:mm}, in terms of chord diagrams:
\begin{equation}\label{eq:chord-mm}
  \sigma^T_{(m,m)} = (q-1)^m \sum_{\sigma} q^{v(\sigma)} Y^T_{(2m)},  
\end{equation}
the sum being over all chord diagrams $\sigma$ on $2m$ nodes, and $v(\sigma)$ being the number of crossings in $\sigma$.
Figure~\ref{fig:chord-example} is an example of a chord diagram on eight nodes with two crossings.
\begin{figure}[h]
\centering
\begin{tikzpicture}
  [every node/.style={circle,fill=black,inner sep=0pt, minimum size=6pt}]
  \node[label=below:$1$] (1) at (1,0) {};
  \node[label=below:$2$] (2) at (2,0) {};
  \node[label=below:$3$] (3) at (3,0) {};
  \node[label=below:$4$] (4) at (4,0) {};
  \node[label=below:$5$] (5) at (5,0) {};
  \node[label=below:$6$] (6) at (6,0) {};
  \node[label=below:$7$] (7) at (7,0) {};
  \node[label=below:$8$] (8) at (8,0) {};
  \draw[thick,color=teal]
  (1) [out=45, in=135] to (4);
  \draw[thick,color=teal]
  (2) [out=45, in=135] to (6);
  \draw[thick,color=teal]
  (3) [out=45, in=135] to (5);
  \draw[thick,color=teal]
  (7) [out=45, in=135] to (8);
\end{tikzpicture}
\caption{A chord diagram on eight nodes with two crossings.}
\label{fig:chord-example}
\end{figure}
Combining \eqref{eq:mm} and \eqref{eq:chord-mm} gives a new proof of the Touchard--Riordan formula~\cite{MR366686, MR46325} (see also~\cite[pp.~337--344]{MR2339282} for an elegant exposition):
\begin{equation}\label{eq:touchard-riordan}
  (q-1)^m \sum_{\sigma} q^{v(\sigma)} = \sum_{j=0}^{2m} (-1)^j q^{\binom{m-j+1}{2}} \binom{2m}{j}.
\end{equation}

Similarly, when $T\in M_{3m}(\Fq)$ has distinct eigenvalues in $\Fq$, $\sigma^T_{(m,m,m)}$ can be expressed in terms of set partitions of $[3m]$ into $m$ blocks of size $3$. For such a set partition, an interlacing of blocks $(a,b,c)$ and $(x,y,z)$ is either $a<x<b<y$ or $b<y<c<z$.
The results of~\cite{MR4797454} show that
\begin{equation}
  \sigma^T_{(m,m,m)} = (q-1)^{2m} \sum_{\pi} q^{\text{interlacings}(\pi)} Y^T_{(3m)},
\end{equation}
the sum being over all set partitions $\pi$ of $[3m]$ into $m$ blocks of size $3$, and $\text{interlacings}(\pi)$ being the number of interlacings in $\pi$.
In this case, 
$$Y^T_{\j}=\binom{3m}{j_1,j_2,j_3}$$ is the ordinary multinomial coefficient, and so  \eqref{eq:conj2} implies the following extension of the Touchard--Riordan formula:
\begin{equation}\label{eq:touchard-riordan-extension}
  (q-1)^{2m} \sum_{\pi} q^{\text{interlacings}(\pi)} = q^{m^2-m}\sum_{\sumj =3m} q^{\frac 13\left(\sum_{i=1}^3 \binom{j_i}{2} - g(j_1,j_2,j_3)\right)} \binom{3m}{j_1,j_2,j_3} \omega^{j_2+2j_3},
\end{equation}
with $g(j_1,j_2,j_3)$ as in \eqref{conj1}.
This is currently unproven.
\section{Proof of Theorems~\ref{th:conj1-gen} and \ref{th:conj1-gen2}}\label{sec:3}

In this section, we prove Theorem~\ref{th:conj1-gen}, and then  extend the proof  to obtain Theorem~\ref{th:conj1-gen2}. 


To begin, we  first take $j_1=3m -j_2-j_3$ in the sum on the left-hand side of \eqref{conj1-gen}.  The sum becomes over $j_2+j_3\le 3m$. The sum may then be written in the form
$$\sum_{j_2+j_3\le 3m} (***) = \sum_{n=0}^{3m} \sum_{j_2+j_3=n} (***).$$

Next, we change indices in the inner sum to $k$. We see that:
$$
\systeme{j_2 + 2j_3 = k , j_2 + j_3 = n}
\quad\iff\quad
\systeme{j_3 = k-n,  j_2 = 2n-k}.
$$
Note that $2j_2+j_3 = 3n-k$. Further $0\le j_3 \le n$, so $n\le k\le 2n$. 
We obtain:
\begin{equation}\label{eq8}
\sum_{n=0}^{3m}\sum_{k=n}^{2n}
q^{n^2} z^n q^{\frac{1}{3}( {k^2}-g(k,n))-kn}
  \frac{
    \pqrfac{q}{3m}{q}
  }{
    \pqrfac{q}{3m-n}{q}
    \pqrfac{q}{2n-k}{q}
    \pqrfac{q}{k-n}{q}
  }
  \omega^k,
\end{equation}
 where, now, 
 $$ 
 g(k,n)
  =
  \begin{cases}
    3n-k, & \text{if }k \equiv 2 \pmod{3};\\
    k, & \text{if }k \equiv 0,1 \pmod{3}.
  \end{cases}
$$
Note that the sums terminate naturally, due to \eqref{natural-term}.
So $n\le 3m, k\le 2n$, and $k\ge n$ is implied by the terms; thus we may as well write the sum with an unrestricted index. We now break the sums into three parts, where $k\mapsto 3k, 3k+1, \text{ and }3k-1$. 
We obtain three sums in the form:
$$\sum_{n=0}^\infty\sum_{k=0}^\infty A(n,k)
= \sum_{n=0}^\infty\sum_{k=0}^\infty A(n,3k)
+ \sum_{n=0}^\infty\sum_{k=0}^\infty A(n,3k+1)
+ \sum_{n=0}^\infty\sum_{k=0}^\infty A(n,3k-1). 
$$
Eventually, we get that the left-hand side equals the following:

\begin{multline}\label{eqn9}
\sum_{n=0}^{\infty}
  q^{n^2 } z^n
  \frac{
    \pqrfac{q}{3m}{q}
  }{
    \pqrfac{q}{3m-n}{q}\,
    \pqrfac{q}{n}{q}
  }
  \sum_{k=0}^{\infty}
  q^{3k^2 - k - 3kn}
  \frac{
    \pqrfac{q}{n}{q}
  }{
    \pqrfac{q}{2n-3k}{q}\,
    \pqrfac{q}{3k-n}{q}
  }\\
  +\omega \sum_{n=0}^{\infty} 
  q^{n^2 } z^n
  \frac{
    \pqrfac{q}{3m}{q}
  }{
    \pqrfac{q}{3m-n}{q}\,
    \pqrfac{q}{n}{q}
  }
  \sum_{k=0}^{\infty}
  q^{3k^2 + k - 3kn-n}
  \frac{
    \pqrfac{q}{n}{q}
  }{
    \pqrfac{q}{2n-3k-1}{q}\,
    \pqrfac{q}{3k-n+1}{q}
  }\\
+\omega^2 \sum_{n=0}^{\infty} 
  q^{n^2 } z^n
  \frac{
    \pqrfac{q}{3m}{q}
  }{
    \pqrfac{q}{3m-n}{q}\,
    \pqrfac{q}{n}{q}
  }
  \sum_{k=0}^{\infty}
  q^{3k^2 - k - 3kn}
  \frac{
    \pqrfac{q}{n}{q}
  }{
    \pqrfac{q}{2n-3k+1}{q}\,
    \pqrfac{q}{3k-n-1}{q}
  }.
\end{multline}
Next we consider the three inner sums on $k$. 
Denote them by $S_1(n)$, $S_2(n)$ and $S_3(n)$, respectively. 
They can be written as:
\begin{subequations}
\begin{align}
S_1(n) &=
  \sum_{k\in \mathbb{Z}} 
  q^{3k^2 - k - 3kn}
  \qbin{n}{3k-n}
   \\
S_2(n) &= 
   q^{-n}\sum_{k\in \mathbb{Z}}
  q^{3k^2 + k - 3kn}
  \qbin{n}{3k+1-n}
  \\
   S_3(n) &= 
   \sum_{k\in \mathbb{Z}}
  q^{3k^2 - k - 3kn}
  \qbin{n}{3k-1-n}
  .
  \end{align}
  \end{subequations}
Note that these sums run over $k\in \mathbb{Z}$. There are natural limits in all these sums,  and they are actually finite sums. We will show the following:
\begin{proposition}\label{prop:1} Let $S_1(n)$, $S_2(n)$ and $S_3(n)$ be as above. Then:
\begin{enumerate}[(i.)]
\item $S_2(n) = S_3(n)$.
\item $S_1(n) - S_3(n) = (-1)^n q^{-\binom{n+1}{2}}.$
\end{enumerate}
\end{proposition}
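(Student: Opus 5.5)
We first reindex all three sums by the lower entry of the $q$-binomial coefficient. In $S_1(n)$ put $j=3k-n$; in $S_2(n)$ put $j=3k+1-n$; in $S_3(n)$ put $j=3k-1-n$. A direct computation (completing the products) should give
\begin{align*}
S_1(n)&=\sum_{j\equiv -n \ (3)} q^{f_n(j)}\qbin{n}{j}, \qquad
S_2(n)=\sum_{j\equiv 1-n \ (3)} q^{f_n(j)}\qbin{n}{j},\\
S_3(n)&=\sum_{j\equiv 2-n \ (3)} q^{h_n(j)}\qbin{n}{j},
\end{align*}
where
\[
f_n(j)=\tfrac13\big(j^2-jn-2n^2-j-n\big),\qquad
h_n(j)=\tfrac13\big(j^2-jn-2n^2+j-2n\big)=\tfrac13 (j+n+1)(j-2n).
\]
So all three are partial sums over a residue class mod $3$ of a single quadratic weight against $\qbin{n}{j}$.

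For part (i), apply the reflection $j\mapsto n-j$ in $S_3(n)$, using $\qbin{n}{j}=\qbin{n}{n-j}$. One checks that $h_n(n-j)=f_n(j)$, and that the condition $n-j\equiv 2-n$ becomes $j\equiv 2n-2\equiv 1-n \pmod 3$. This is exactly the summation range and summand of $S_2(n)$, so $S_2(n)=S_3(n)$ follows immediately.

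For part (ii), set $D(n)=S_1(n)-S_3(n)$. By (i) we may equally write $D(n)=S_1(n)-S_2(n)$, which involves the single weight $q^{f_n(j)}$ on the two residue classes $-n$ and $1-n$. Since $D(0)=1$, the claim is equivalent to the recursion
\[
D(n)=-q^{-n}D(n-1)\qquad (n\ge 1).
\]
I would prove this with the $q$-Pascal rules $\qbin{n}{j}=\qbin{n-1}{j-1}+q^{j}\qbin{n-1}{j}$ and its companion $\qbin{n}{j}=q^{n-j}\qbin{n-1}{j-1}+\qbin{n-1}{j}$. The steps are:
\begin{enumerate}[(a)]
\item Insert a Pascal rule into each of $S_1(n)$ and $S_2(n)$, choosing whichever rule makes the resulting exponents match.
\item Compare the exponents with those at level $n-1$. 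One has $f_n(j)$ versus $f_{n-1}(j-1)$ and $f_{n-1}(j)$, where the residue classes shift by one as $n\to n-1$. Each piece becomes $q^{-n}$ (up to sign) times one of the three class sums at level $n-1$, in the two normalizations $f$ and $h$.
\item Use (i) at level $n-1$ to identify the $h$-type sums with $f$-type sums.
\item Check that the surviving combination is $-q^{-n}\big(S_1(n-1)-S_3(n-1)\big)$.
\end{enumerate}

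An alternative to the recursion, if it proves cleaner, is a roots-of-unity filter. Write the residue-class indicator as $\frac13\sum_{t}\omega^{t(j+n)}$, and try to recognize $\sum_j q^{f_n(j)}\omega^{tj}\qbin{n}{j}$ through the $q$-binomial theorem \eqref{q-vandermonde1a}. However, the $j^2/3$ in the exponent does not match $\binom{j}{2}$ directly, so I expect the Pascal recursion to be the workable route.

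The main obstacle is the bookkeeping in step (b). The Pascal shift moves $j$ and $n$ together, which permutes the residue classes $-n$, $1-n$, $2-n$. One must pick, in each class, the Pascal rule whose $q^{j}$ or $q^{n-j}$ factor exactly absorbs the difference $f_n(j)-f_{n-1}(\cdot)$, which is linear in $j$. If no uniform choice works, I would instead track the vector $(S_1(n),S_2(n),S_3(n))$ through a $3\times 3$ linear recursion. Then I would show that $(1,0,-1)$ is a left eigenvector with eigenvalue $-q^{-n}$, after which $D(n)=(-1)^nq^{-\binom{n+1}{2}}$ follows by induction from $D(0)=1$.
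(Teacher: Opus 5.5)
Your part (i) is correct and matches the paper: your reflection $j\mapsto n-j$, with $h_n(n-j)=f_n(j)$, is the paper's $k\mapsto n-k$. Your strategy for (ii), proving $D(n)=-q^{-n}D(n-1)$ with the $q$-Pascal rules, is also the paper's. The gap is that you never carry out the Pascal step, and the mechanism you predict in (b)--(c) fails for the pairing you chose.

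With $D(n)=S_1(n)-S_2(n)$, the pieces of $S_1(n)$ lie in the classes $j\equiv -n,\ 2-n$ at level $n-1$, and those of $S_2(n)$ lie in $j\equiv -n,\ 1-n$. On the shared class $j\equiv -n$ the exponents never agree identically, whichever rules you pick, so nothing cancels termwise.

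Worse, take the piece of $S_1(n)$ coming from $\qbin{n-1}{j-1}$. Using $\qbin{n}{j}=q^{j}\qbin{n-1}{j}+\qbin{n-1}{j-1}$ and $f_n(j+1)=h_n(j)$, it is $\sum_{j\equiv 2-n}q^{h_n(j)}\qbin{n-1}{j}$ (the other rule is no better). The differences $h_n(j)-f_{n-1}(j)=(j-5n+1)/3$ and $h_n(j)-h_{n-1}(j)=-(j+4n)/3$ are not constant. So this piece is not $q^{c}$ times any level-$(n-1)$ class sum in either normalization, and (i) at level $n-1$ cannot convert it. The corresponding piece $\sum_{j\equiv -n}q^{h_n(j)}\qbin{n-1}{j}$ of $S_2(n)$ has the same defect.

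Your $3\times 3$ fallback assumes the Pascal step closes up on $(S_1,S_2,S_3)$, and that is exactly what fails. For example, $S_1(n)=q^{-n}S_3(n-1)+\sum_{j\equiv 2-n}q^{h_n(j)}\qbin{n-1}{j}$, and the last sum is not one of the $S_i(n-1)$.

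The missing idea is to keep $S_3(n)$ and use the two different Pascal rules:
$\qbin{n}{j}=q^{j}\qbin{n-1}{j}+\qbin{n-1}{j-1}$ in $S_1(n)$, and $\qbin{n}{j}=\qbin{n-1}{j}+q^{n-j}\qbin{n-1}{j-1}$ in $S_3(n)$.
Then the term $q^{f_n(j)}\qbin{n-1}{j-1}$ from $S_1(n)$ cancels termwise against $q^{h_n(j-1)}\qbin{n-1}{j-1}$ from $S_3(n)$, because $f_n(j)=h_n(j-1)$. The identities $f_n(j)+j=h_{n-1}(j)-n$ and $h_n(j)+n-j=f_{n-1}(j-1)-n$ show that the surviving pieces are exactly $q^{-n}S_3(n-1)$ and $-q^{-n}S_1(n-1)$. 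No appeal to (i) is needed.

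Your pairing can be rescued, using the first rule in both sums, but only through an extra reflection $j\mapsto n-1-j$ at level $n-1$. The two $h_n$-weighted pieces cancel because $h_n(n-1-j)=h_n(j)$. The remaining piece of $S_2(n)$ equals $q^{-n}S_1(n-1)$ by $f_n(j)+j=h_{n-1}(j)-n$ together with $h_{n-1}(n-1-j)=f_{n-1}(j)$. Your plan does not identify this cancellation, and without it step (b) does not go through.
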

Using Proposition~\ref{prop:1}, we see that \eqref{eqn9} reduces to:
\begin{multline*}
\sum_{n=0}^{\infty}
  q^{n^2 } z^n
  \frac{
    \pqrfac{q}{3m}{q}
  }{
    \pqrfac{q}{3m-n}{q}\,
    \pqrfac{q}{n}{q}
  }
(S_1(n)+\omega S_2(n)+ \omega^2 S_3(n) \\
=
\sum_{n=0}^{\infty}
  q^{n^2 } z^n
  \frac{
    \pqrfac{q}{3m}{q}
  }{
    \pqrfac{q}{3m-n}{q}\,
    \pqrfac{q}{n}{q}
  } (-1)^n q^{-\binom{n+1}2} \\
  +
\sum_{n=0}^{\infty}
  q^{n^2 } z^n
  \frac{
    \pqrfac{q}{3m}{q}
  }{
    \pqrfac{q}{3m-n}{q}\,
    \pqrfac{q}{n}{q}
  } S_3(n) (1+\omega+\omega^2).
  \end{multline*}
  Since $1+\omega+\omega^2=0, $ the second sum is $0$. In fact, the first sum can be factored. We have:
\begin{equation}\label{sum4}
\sum_{n=0}^{\infty}
  q^{n^2 } z^n
  \frac{
    \pqrfac{q}{3m}{q}
  }{
    \pqrfac{q}{3m-n}{q}\,
    \pqrfac{q}{n}{q}
  }
  (-1)^n q^{-\binom{n+1}2}
 = \pqrfac{z}{3m}{q}.
 \end{equation}
This is the $m\mapsto 3m$ 
case of \eqref{q-vandermonde1a}.


To complete the proof of \eqref{conj1-gen}, we still need to prove Proposition~\ref{prop:1}.

\begin{proof}[Proof of Proposition~\ref{prop:1}] 
We first prove part (i). Let
$$F(n,k) := q^{3k^2+k-3nk-n} \qbin{n}{3k+1-n}.$$ 
Note that $F(n,k)$ is the summand of $S_2(n)$.
Consider $F(n, n-k)$:
\begin{multline*}
F(n,n-k) = q^{3(n-k)^2+(n-k)- 3 n(n-k) - n} \qbin{n}{3(n-k)+1-n} \\
= q^{3k^2-k-3nk} \qbin{n}{2n-3k+1} = q^{3k^2-k-3nk} \qbin{n}{3k-1-n}.
\end{multline*}
This is the summand of $S_3(n)$. Thus each term of $S_2(n)$ corresponds to a term in $S_3(n)$ and the two sums are the same; the symmetry shows that  $S_3(n)$ is obtained by reversing the sum $S_2(n)$. This proves (i).

For the proof of part (ii), consider
$$g(n):= S_1(n)-S_3(n).$$
We observe that $g(0)= 1$ and show that
$$g(n)=-q^{-n} g(n-1).$$
We require the identities ~\cite[eq.\ (I.45)]{GR90}:
\begin{equation*}
\left[\begin{matrix} n \\ r \end{matrix}\right]_q = 
q^{r}\left[\begin{matrix} n-1 \\ r \end{matrix}\right]_q
+
\left[\begin{matrix} n-1 \\ r-1 \end{matrix}\right]_q 
=
\left[\begin{matrix} n-1 \\ r \end{matrix}\right]_q
+
q^{\,n-r}\left[\begin{matrix} n-1 \\ r-1 \end{matrix}\right]_q.
\end{equation*}

We have:
\begin{align*}
g(n) &=
\sum_{k\in\mathbb Z}
q^{3k^2-k -3nk}
\bigg( \qbin{n}{3k-n}
- 
\qbin{n}{3k-1-n} 
\bigg)
\\
&=
\sum_{k\in\mathbb Z}
q^{3k^2-k -3nk}
\Bigg(
\bigg(
q^{3k-n} \qbin{n-1}{3k-n} 
+
\qbin{n-1}{3k-n-1}
\bigg)
- \\
&\hspace{1.5in}
\bigg(
\qbin{n-1}{3k-1-n}
+
q^{2n-3k+1} \qbin{n-1}{3k-2-n}
\bigg)
\Bigg)
\\
&= 
\sum_{k\in\mathbb Z}
q^{3k^2-k -3nk}
\bigg(
q^{3k-n} \qbin{n-1}{3k-n} 
-
q^{2n-3k+1} \qbin{n-1}{3k-2-n}
\bigg)\\
&=
\sum_{k\in\mathbb Z}
q^{3k^2-k -3nk}
\bigg(
q^{3k-n} \qbin{n-1}{3k-1-(n-1)} 
-
q^{2n-3k+1} \qbin{n-1}{3k-3-(n-1)}
\bigg)\\
&=
\sum_{k\in\mathbb Z}
q^{3k^2-k -3nk+3k-n}
\qbin{n-1}{3k-1-(n-1)} \\
&\hspace{1.5in}
-
\sum_{k\in\mathbb Z}
q^{3(k+1)^2-(k+1) -3n(k+1)+2n-3(k+1)+1}
\qbin{n-1}{3k-(n-1)}
 \\
&= -q^{-n} g(n-1).
\end{align*}
In the last but one equality, we shifted the index of the second sum. 

From $g(n)=-q^{-n} g(n-1)$, we obtain, by iteration:
$$g(n) = (-1)^n q^{-\binom{n+1}{2}} g(0) =  (-1)^n q^{-\binom{n+1}{2}}.$$ 
\end{proof}
This completes the proof of \eqref{conj1-gen}, and thus also of Conjecture~\ref{conj:1}.

\begin{proof}[Proof of Theorem~\ref{th:conj1-gen2}] The proof is virtually the same as that of \eqref{conj1-gen} . We break the inner sum into three parts, invoke Proposition~\ref{prop:1}, and apply \eqref{q-vandermonde1a} to complete the proof.
\end{proof}

\begin{thebibliography}{1}

\bibitem{MR2339282}
M.~Aigner.
\newblock {\em A Course in Enumeration}, volume 238 of {\em Graduate Texts in
  Mathematics}.
\newblock Springer, Berlin, 2007.

\bibitem{Andrews1981}
G.~E. Andrews.
\newblock Ramanujan's ``lost'' notebook. {I}. {P}artial {$\theta $}-functions.
\newblock {\em Adv. in Math.}, 41(2):137--172, 1981.

\bibitem{MR1141317}
E.~A. Bender, R.~Coley, D.~P. Robbins, and H.~Rumsey, Jr.
\newblock Enumeration of subspaces by dimension sequence.
\newblock {\em J. Combin. Theory Ser. A}, 59(1):1--11, 1992.

\bibitem{GR90}
G.~Gasper and M.~Rahman.
\newblock {\em Basic {H}ypergeometric {S}eries}, volume~96 of {\em Encyclopedia
  of Mathematics and its Applications}.
\newblock Cambridge University Press, Cambridge, second edition, 2004.
\newblock With a foreword by Richard Askey.

\bibitem{MR4555237}
A.~Prasad and S.~Ram.
\newblock Splitting subspaces and a finite field interpretation of the
  {T}ouchard-{R}iordan formula.
\newblock {\em European J. Combin.}, 110:Paper No. 103705, 11, 2023.

\bibitem{MR4797454}
A.~Prasad and S.~Ram.
\newblock Set partitions, tableaux, and subspace profiles under regular
  diagonal matrices.
\newblock {\em European J. Combin.}, 124:Paper No. 104060, 27, 2025.

\bibitem{ram2024subspaceprofilesfinitefields}
S.~Ram.
\newblock Subspace profiles over finite fields and $q$-whittaker expansions of
  symmetric functions, 2024.

\bibitem{MR366686}
J.~Riordan.
\newblock The distribution of crossings of chords joining pairs of {$2n$}
  points on a circle.
\newblock {\em Math. Comp.}, 29:215--222, 1975.

\bibitem{MR46325}
J.~Touchard.
\newblock Sur un probl\`eme de configurations et sur les fractions continues.
\newblock {\em Canad. J. Math.}, 4:2--25, 1952.

\end{thebibliography}

\end{document}